\theoremstyle{plain}
\newtheorem{lema}{Lemma}
\newtheorem{prop}[lema]{Proposition}
\newtheorem{teo}[lema]{Theorem}
\theoremstyle{remark}
\newtheorem{obs}[lema]{Remark}
\theoremstyle{definition}
\newcommand{\cat}{\textrm{CAT}}
\newcommand{\N}{\mathbb{N}}
\newcommand{\R}{\mathbb{R}}
\newcommand{\cc}{\mathrm{c}}
\newcommand\restr{\raisebox{-0.3ex}{$|$}\raisebox{0.3ex}{}}
\begin{document}

\title[Lion and man in non-metric spaces]{Lion and man in non-metric spaces}

\author[J.A. Barmak]{Jonathan Ariel Barmak $^{\dagger}$}

\thanks{$^{\dagger}$ Researcher of CONICET. Partially supported by grants UBACyT 20020100300043, CONICET PIP 112-201101-00746 and ANPCyT PICT-2011-0812.}

\address{Universidad de Buenos Aires. Facultad de Ciencias Exactas y Naturales. Departamento de Matem\'atica. Buenos Aires, Argentina.}

\address{CONICET-Universidad de Buenos Aires. Instituto de Investigaciones Matem\'aticas Luis A. Santal\'o (IMAS). Buenos Aires, Argentina. }

\email{jbarmak@dm.uba.ar}

\begin{abstract}
A lion and a man move continuously in a space $X$. The aim of the lion is to capture his prey while the man wants to escape forever. Which of them has a strategy? This question has been studied for different metric domains. In this article we consider the case of general topological spaces.  
\end{abstract}

\subjclass[2010]{91A24, 49N75, 54F05, 54G15}

\keywords{Lion and man problem, continuous pursuit-evasion, non-metric spaces, Axiom of Choice.}

\maketitle

The original version of this problem is attributed to Rado who posed it 90 years ago: there is a lion and a man in a circular arena. They can move with the same maximum speed. The aim of the lion is to capture the man, while the man wants to escape. Which of them has a strategy? The players can look at each other at all times, each of them is considered as a point in the circle, and the lion captures the man only if the two points are in the exact same position. It is clear that if we replace the domain by the whole plane, the man can always escape, but what happens in the two dimensional disk $D^2$? Despite our intuition that the boundary should favor the lion, Besicovitch showed in 1952 that, even though the lion can get arbitrarily close, the man can escape forever by following certain polygonal. His beautiful argument is explained by Littlewood in \cite{Lit}.

What does it mean for the man to have a strategy in $D^2$? It means that for any path $\beta:[0,+\infty) \to D^2$ the lion follows with maximum speed $M$ (that is, $\beta$ is Lipschitz with Lipschitz constant $M$), the man will be able to follow another path $S(\beta) :[0,+\infty) \to D^2$ with maximum speed $M$ such that $S(\beta)(t)\neq \beta (t)$ for every $t\ge 0$. But since the man cannot take a look into the future, we require that his position at time $t$ is determined by the path $\beta \restr  _{[0,t)}$, meaning that if $\beta ':[0, +\infty)\to D^2$ is another path for the lion such that $\beta ' \restr _{[0,t)}=\beta \restr_{[0,t)}$, then $S(\beta') \restr_{[0,t]}=S(\beta) \restr_{[0,t]}$. A strategy for the lion is defined in a similar way. This natural interpretation of the word strategy was considered in \cite{BLW}. The paradoxical drawback of this definition is that there exist spaces in which both players have a strategy \cite[Theorems 8 and 9]{BLW}. Our belief that we can put both strategies to play one against the other is wrong: in order to determine the position of the man for some $t>0$ we need to know the position of the lion for each $t'<t$, but these in turn are determined by the positions of the man for $t'<t$. Since $[0,+\infty)$ is not well-ordered, this recursion does not yield a path for the man.

There are many variants of Rado's original problem. Some of them include pursuits in discrete time, in which the players take turns to play \cite{Sga}, other versions introduce the notion of capture radius \cite{ABG, BKK}, and the players are not regarded as points anymore. Some add restrictions to the paths \cite{Croft}, consider several pursuers chasing a unique evader or allow different maximum speeds. The regions where the game takes place go from subspaces of Euclidean spaces \cite{BKK2}, surfaces, $\cat(0)$ spaces \cite{ABG, BKK}, graphs and general metric spaces \cite{BLW}. The motivations of the problem vary from Robotics, Computer Science to Pure Mathematics.

In this paper we will study the lion and man problem in non-necessarily metric topological spaces. Of course, in this context the notion of speed makes no sense. The plethora of spaces in the non-Hausdorff zoo will make up for the lack of speed to produce interesting examples. The tools used in this article are completely elementary. 

Let $X$ be a topological space. Let $m,l\in X$ denote the starting positions of the man and the lion respectively. Given $x\in X$ denote by $P_x(X)$ the set of continuous maps $\gamma :[0,+\infty) \to X$ with starting point $\gamma (0)=x$. Given $\gamma \in P_x(X)$ and $t\in [0,+\infty)$, we denote by $\gamma _{<t}$ and $\gamma_{\le t}$ the restrictions $\gamma\restr_{[0,t)}$ and $\gamma\restr_{[0,t]}$. A \textit{strategy for the man} is a function $S:P_l(X)\to P_m(X)$ with the following properties:

\noindent i. For each $\beta \in P_l(X)$ and each $t\ge 0$, $S(\beta)(t)\neq \beta (t)$.

\noindent ii. If $\beta, \beta '\in P_l(X)$ and $t\ge 0$ are such that $\beta_{<t}=\beta'_{<t}$, then $S(\beta)_{\le t}=S(\beta)_{\le t}$.

The second requirement is known in \cite{BLW} as the \textit{no-lookahead rule}.

A \textit{strategy for the lion} is a function $S:P_m(X)\to P_l(X)$ with the following properties:

\noindent i. For each $\alpha \in P_m(X)$ there exists $t\ge 0$ such that $S(\alpha)(t)=\alpha (t)$.

\noindent ii. If $\alpha, \alpha '\in P_m(X)$ and $t\ge 0$ are such that $\alpha_{<t}=\alpha'_{<t}$, then $S(\alpha)_{\le t}=S(\alpha)_{\le t}$.
    
The following easy observation shows that a metric turns this game into a trivial pursuit.

\begin{prop} \label{leont2}
Let $X$ be a path-connected Hausdorff space and let $m,l\in X$. Then the lion has a strategy.
\end{prop}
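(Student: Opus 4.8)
The plan is to exhibit an explicit strategy: the lion first walks along a fixed path from $l$ to $m$ during the time interval $[0,1]$, and then \emph{retraces the man's path under a time change that accelerates fast enough to "catch up"}. Concretely, using path-connectedness I would fix a continuous $\delta\colon[0,1]\to X$ with $\delta(0)=l$ and $\delta(1)=m$, and a reparametrization $\phi\colon[1,+\infty)\to[0,+\infty)$, say $\phi(t)=\min\{2(t-1),\,t\}$, so that $\phi$ is continuous, $\phi(1)=0$, $\phi(2)=2$, and $0\le\phi(t)\le t$ for all $t\ge 1$. Then define $S\colon P_m(X)\to P_l(X)$ by $S(\alpha)_{\le 1}=\delta$ and $S(\alpha)(t)=\alpha(\phi(t))$ for $t\ge 1$. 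The two pieces agree at $t=1$ because $\delta(1)=m=\alpha(0)=\alpha(\phi(1))$, so by the pasting lemma $S(\alpha)$ is continuous; and $S(\alpha)(0)=\delta(0)=l$, hence $S(\alpha)\in P_l(X)$.

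Next I would check the two requirements for a lion strategy. Property (i) is immediate from the choice of $\phi$: $S(\alpha)(2)=\alpha(\phi(2))=\alpha(2)$, so the lion captures the man by time $t=2$. For property (ii) (the no-lookahead rule), suppose $\alpha_{<t}=\alpha'_{<t}$. The content of this step—and the only place the hypotheses beyond path-connectedness are used—is the claim that then already $\alpha_{\le t}=\alpha'_{\le t}$: for $t=0$ both restrictions are the single point $\alpha(0)=m=\alpha'(0)$, and for $t>0$, picking any sequence $s_n\nearrow t$ in $[0,t)$, continuity gives $\alpha(s_n)\to\alpha(t)$ and $\alpha'(s_n)\to\alpha'(t)$, while $\alpha(s_n)=\alpha'(s_n)$, so uniqueness of sequential limits in the Hausdorff space $X$ forces $\alpha(t)=\alpha'(t)$. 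Granting this, $S(\alpha)_{\le t}$ depends only on $\alpha_{\le t}$: on $[0,\min\{t,1\}]$ it equals $\delta$, which does not involve $\alpha$ at all, and for $s\in[1,t]$ (when $t\ge 1$) we have $S(\alpha)(s)=\alpha(\phi(s))$ with $\phi(s)\le s\le t$, so again only $\alpha_{\le t}$ is consulted. Hence $S(\alpha)_{\le t}=S(\alpha')_{\le t}$.

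The conceptual obstacle is not in the routine continuity bookkeeping but in seeing \emph{why such a strategy can exist at all}: a pure time-reparametrization $\alpha\circ\phi$ cannot start at $l\neq m$, while gluing a connecting path forces the time change to begin at $\phi(1)=0$, after which no-lookahead forbids $\phi(t)>t$—so the lion can only catch up if $\phi$ rises to meet the identity with slope exceeding $1$, which is exactly what the factor $2$ arranges. The Hausdorff hypothesis is what makes "$\alpha_{<t}$ determines $\alpha(t)$" legitimate (left-continuity together with uniqueness of limits), so that a strategy reading $\alpha$ up to and including the present time still obeys the no-lookahead rule; in a non-Hausdorff space this fails, which is presumably why the statement is phrased for Hausdorff $X$. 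Everything else is a short verification.
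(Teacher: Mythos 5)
Your proposal is correct and is essentially the paper's own argument: the lion traverses a fixed path from $l$ to $m$ and then replays the man's path at an accelerated time change bounded by the identity, with the Hausdorff hypothesis (uniqueness of limits) guaranteeing that $\alpha_{<t}$ determines $\alpha(t)$, so the no-lookahead rule holds even where $\phi(t)=t$. The only cosmetic differences are the time scale (capture at $t=2$ instead of $t=1$) and that your lion keeps tracking $\alpha(t)$ after capture rather than standing still.
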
   
\begin{proof}
Define $S:P_m(X)\to P_l(X)$ as follows. Choose first any path $\gamma$ from $l$ to $m$. Let $\alpha \in P_m(X)$. Let $S(\alpha)(t)=\gamma (2t)$ for $t\le \frac{1}{2}$, $S(\alpha)(t)=\alpha(2t-1)$ for $\frac{1}{2}\le t\le 1$ and $S(\alpha) (t)=\alpha (1)$ for $t\ge 1$. Then $S(\alpha)\in P_l(X)$ and $S(\alpha)(1)=\alpha (1)$. The map $S$ satisfies the no-lookahead rule since $2t-1<t$ for $t<1$ and $S(\alpha)(1)=\alpha (1)$ is determined by $\alpha _{<1}$ by the Hausdorff hypothesis.  
\end{proof}

\begin{prop} \label{hombret2}
Let $X$ be a Hausdorff space which admits a fixed-point-free map $f:X\to X$. Then the man has a strategy for some $m,l\in X$.
\end{prop}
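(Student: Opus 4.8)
The plan is to let the man simply shadow the lion through the map $f$. Concretely, fix any $l\in X$ and put $m=f(l)$; note $m\neq l$ since $f$ is fixed-point-free. Define $S\colon P_l(X)\to P_m(X)$ by $S(\beta)=f\circ\beta$. The first thing I would check is that $S$ is well defined: $f\circ\beta$ is continuous, being a composition of continuous maps, and $(f\circ\beta)(0)=f(\beta(0))=f(l)=m$, so indeed $S(\beta)\in P_m(X)$.

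Property (i) is then immediate: for every $\beta\in P_l(X)$ and every $t\ge 0$ we have $S(\beta)(t)=f(\beta(t))\neq\beta(t)$, because $f$ has no fixed point. The only place where the hypotheses on $X$ enter is the no-lookahead rule (ii), and here I would argue exactly as in the proof of Proposition~\ref{leont2}: in a Hausdorff space the restriction $\beta_{<t}$ already determines the value $\beta(t)$, since for $t>0$ the point $\beta(t)$ is the unique (by Hausdorffness) limit of $\beta(s)$ as $s\to t^-$, and for $t=0$ the value $\beta(0)=l$ is fixed by definition of $P_l(X)$. Hence if $\beta,\beta'\in P_l(X)$ satisfy $\beta_{<t}=\beta'_{<t}$, then $\beta_{\le t}=\beta'_{\le t}$, and therefore $S(\beta)_{\le t}=(f\circ\beta)_{\le t}=(f\circ\beta')_{\le t}=S(\beta')_{\le t}$. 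This finishes the verification, with the pair $(m,l)=(f(l),l)$ for an arbitrary $l\in X$.

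I do not expect any genuine obstacle here; the entire content is the observation — already exploited in Proposition~\ref{leont2} — that in a Hausdorff space the no-lookahead rule does not actually prevent a player from ``using the opponent's current position'', because that position is recovered as a left limit from the past. Consequently the path $f\circ\beta$, which is safe at every instant, is a legitimate strategy. The only small points to be careful about are that $l$ may be taken arbitrary (so the conclusion ``for some $m,l$'' holds with $m=f(l)$ and any $l$) and that $m\neq l$, which is exactly where fixed-point-freeness of $f$ is used.
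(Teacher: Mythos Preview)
Your argument is correct and essentially identical to the paper's: choose any $l$, set $m=f(l)$, define $S(\beta)=f\circ\beta$, and invoke Hausdorffness for the no-lookahead rule. The only (harmless) imprecision is in your closing remark: fixed-point-freeness is used at \emph{every} $t$ to ensure $S(\beta)(t)\neq\beta(t)$, not merely at $t=0$ to ensure $m\neq l$.
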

\begin{proof}
Let $l\in X$ be any point and take $m=f(l)$. Define $S:P_l(X)\to P_m(X)$ by $S(\beta)(t)=f(\beta(t))$. Once again, the Hausdorff axiom guarantees that $S$ satisfies the no-lookahead rule.
\end{proof}

For example in $S^1$, for antipodal (or any two different) starting points, both the lion and the man have strategies.

We turn now to non-Hausdorff spaces.

\begin{prop} \label{indiscreto}
Let $X$ be an indiscrete space. Then the lion has a strategy.
\end{prop}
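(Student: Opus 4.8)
The plan is to exploit the fact that in an indiscrete space \emph{every} map $[0,+\infty)\to X$ is continuous, so the lion is subject to no ``speed'' restriction at all and must only respect the no-lookahead rule. The key point is that the germ at $0$ of the man's path $\alpha$ is already determined by $\alpha_{<t}$ for \emph{every} $t>0$; hence the lion may decide in advance, for each possible such germ, a path that realizes it, and then coincide with the man on a whole initial interval.

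First I would consider the equivalence relation $\sim$ on $P_m(X)$ given by $\alpha\sim\beta$ if and only if $\alpha\restr_{[0,\epsilon)}=\beta\restr_{[0,\epsilon)}$ for some $\epsilon>0$ (transitivity holds because the minimum of two positive numbers is positive), and, using the Axiom of Choice, fix a representative $\gamma_C\in C$ for every $\sim$-class $C$. Then I would define $S:P_m(X)\to P_l(X)$ by $S(\alpha)(0)=l$ and $S(\alpha)(t)=\gamma_{[\alpha]}(t)$ for $t>0$, where $[\alpha]$ denotes the class of $\alpha$. Since $X$ is indiscrete, $S(\alpha)$ is automatically continuous, and $S(\alpha)(0)=l$, so indeed $S(\alpha)\in P_l(X)$.

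Next I would verify the two defining properties of a strategy for the lion. For the no-lookahead rule: if $\alpha_{<t}=\alpha'_{<t}$ with $t>0$, then $\alpha$ and $\alpha'$ agree on $[0,t)$, so $\alpha\sim\alpha'$ and $\gamma_{[\alpha]}=\gamma_{[\alpha']}$; therefore $S(\alpha)$ and $S(\alpha')$ agree on $[0,t]$ (both take the value $l$ at $0$ and the value $\gamma_{[\alpha]}$ on $(0,t]$). For the capture property: the chosen representative satisfies $\gamma_{[\alpha]}\sim\alpha$, so $\gamma_{[\alpha]}\restr_{[0,\epsilon)}=\alpha\restr_{[0,\epsilon)}$ for some $\epsilon>0$, whence $S(\alpha)(t)=\gamma_{[\alpha]}(t)=\alpha(t)$ for all $t\in(0,\epsilon)$; in particular $S(\alpha)(\epsilon/2)=\alpha(\epsilon/2)$, so the lion catches the man.

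I do not anticipate a genuine obstacle; the step that needs care is checking that $[\alpha]$ is really a function of $\alpha_{<t}$ for each $t>0$ --- otherwise the no-lookahead rule could be violated --- and this is clear because the germ of $\alpha$ at $0$ is read off from $\alpha\restr_{[0,\delta)}$ for any $0<\delta<t$. It should be noted that the construction uses the Axiom of Choice in order to select the representatives $\gamma_C$.
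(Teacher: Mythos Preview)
Your proof is correct and is essentially identical to the paper's own argument: the same germ-at-$0$ equivalence relation on $P_m(X)$, the same Axiom-of-Choice selection of representatives, and the same definition $S(\alpha)(0)=l$, $S(\alpha)(t)=\gamma_{[\alpha]}(t)$ for $t>0$. The only case you do not spell out is the no-lookahead rule at $t=0$, but this is trivial since $S(\alpha)(0)=l$ for every $\alpha$.
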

\begin{proof}
Let $m,l\in X$. Define in $P_m(X)$ the following relation: $\alpha \sim \alpha '$ if there exists $t>0$ such that $\alpha _{<t}=\alpha'_{<t}$. This is clearly an equivalence relation. Denote by $\overline{\alpha}$ the class (germ) of $\alpha$. With the Axiom of Choice we choose for each class $c$ a representative $r(c)\in P_m(X)$. Define $S:P_m(X)\to P_l(X)$ by $S(\alpha)(0)=l$ and $S(\alpha)(t)=r(\overline{\alpha})(t)$ for $t>0$. Since $X$ is indiscrete, $S(\alpha)$ is continuous. If $\alpha_{<t}=\alpha'_{<t}$ for some $t>0$, then $\overline{\alpha}=\overline{\alpha'}$, so the no-lookahead rule is fulfilled. Moreover, $S(\alpha)$ and $\alpha$ coincide in an interval $(0,t)$.
\end{proof}

It is easy to find spaces where the man does not have a strategy. For instance if in $X=[0,1]$ the lion moves in a path $\beta$ that passes through $0$ and $1$, then any other path will coincide with $\beta$ for some $t\ge 0$. The previous results seem to give evidence that the lion has a strategy in every space. However, we will show that the Axiom of Choice can be against the lion in some examples.

\begin{teo}\label{main}
There exists a space $X$ in which the lion does not have a strategy for some initial points $m,l\in X$. 
\end{teo}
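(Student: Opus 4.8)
Two features of the sought-for $X$ are essentially forced. If $X$ is disconnected one may take $m,l$ in different path components and there is nothing to prove, so to obtain a substantive example I want $X$ path-connected; by Proposition \ref{leont2} it must then fail to be Hausdorff. On the other hand $X$ cannot be \emph{too} non-Hausdorff near the starting points: the germ construction of Proposition \ref{indiscreto} in fact produces a lion strategy whenever $l\in\overline{\{m\}}$ (set $S(\alpha)(0)=l$ and $S(\alpha)(t)=r(\overline{\alpha})(t)$ for $t>0$; continuity at $0$ is automatic precisely because every neighbourhood of $l$ then contains $m$), so I must choose $m\neq l$ with $l\notin\overline{\{m\}}$. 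Intuitively the lion will then need a strictly positive amount of time to travel from $l$ toward $m$, during which it cannot be matching the man; the whole construction aims to make this initial delay irreparable. Equally, the ``escape room'' the man exploits must not be of indiscrete type: if $X$ contained a subspace $\{0,1\}$ with the indiscrete topology appearing as a free coordinate of paths, the lion would have a \emph{paradoxical} causal counter-strategy on that coordinate (predict the left-hand eventual value of the man's coordinate when it exists, and output $0$ otherwise), and chasing that strategy shows the evasion recursion the man needs has no solution, because ``never locally constant'' and ``equal to $1$ wherever it oscillates'' are incompatible. So $X$ has to occupy a middle ground: along every path available to the man it should provide a perpetual reservoir of \emph{topologically twinned} points (pairs $p\neq p'$ with the same neighbourhoods, approached so that a left-continuous path may legitimately take either value at the limit time), while being rigid enough that no such oscillation paradox exists. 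Producing a concrete $X$ meeting both demands is the heart of the matter, and is where the non-Hausdorff zoo is genuinely needed.

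Granting such an $X$, the argument is by contradiction: assume $S\colon P_m(X)\to P_l(X)$ is a strategy for the lion and produce a single $\alpha\in P_m(X)$ with $\alpha(t)\neq S(\alpha)(t)$ for all $t\ge 0$, contradicting clause i. The naive plan --- at each $t$ pick $\alpha(t)$ different from $S(\alpha)(t)$, which by the no-lookahead rule is already determined by the already-chosen $\alpha_{<t}$ --- is exactly the recursion over the non-well-ordered set $[0,+\infty)$ which, as noted in the introduction, does not yield a path. The way around this is to arrange (a constraint on $X$) that the man's only genuine decisions are binary twin choices made at a well-ordered cofinal family of ``arrival times'' $\tau_0<\tau_1<\dots$ (of countable order type), that between consecutive arrivals the path is pinned down and the man stays out of the lion's reach, and that $\alpha(\tau_\xi)$ is determined by $\alpha_{<\tau_\xi}$ up to the twin choice. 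Building $\alpha$ then becomes an honest transfinite recursion on $\xi$: having defined $\alpha$ below level $\xi$ we know $\alpha_{<\tau_\xi}$, hence $S(\alpha)_{\le\tau_\xi}$ and in particular $S(\alpha)(\tau_\xi)$, and we let $\alpha(\tau_\xi)$ be the twin different from it; at $\tau_0=0$ no choice is needed since $m\neq l$. The Axiom of Choice is used here to well-order the index set and to make the selections --- this is the sense in which it works ``against the lion''.

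The remaining and hardest steps are the verifications. First, one must check that the $\alpha$ assembled by the recursion is genuinely continuous as a map $[0,+\infty)\to X$, the delicate points being the accumulation times of $\{\tau_\xi\}$: continuity of $\alpha$ there is exactly what fails over a Hausdorff codomain, and it is precisely what the twinning of $X$ is engineered to rescue. Second, one must check that the lion never captures the man between arrivals; here the complete absence of a speed limit in a topological space is a real danger (a continuous $S(\alpha)$ can effectively teleport), so the pinned transit segments must contain no Hausdorff stretch along which the lion could predict and then match the man's position --- this is exactly why a purely indiscrete reservoir is useless and why the corridors between twinned clusters must be given a carefully chosen, only partially twinned, topology. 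Reconciling this requirement with the need for a surviving twin choice at each $\tau_\xi$ is, I expect, the main obstacle. Once continuity of $\alpha$ and the inequality $\alpha(t)\neq S(\alpha)(t)$ for every $t$ are established, clause i of the definition of a lion strategy is violated, so no $S$ exists and Theorem \ref{main} follows.
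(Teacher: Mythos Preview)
This is a plan, not a proof: the central object --- the space $X$ --- is never produced. You say yourself that ``producing a concrete $X$ meeting both demands is the heart of the matter,'' and then do not produce it; the ``granting such an $X$'' recursion and the closing list of verifications are therefore vacuous. You also correctly identify, but do not resolve, the internal tension in your twin-point architecture: if the man's path is genuinely ``pinned down'' on each corridor, then in particular the first corridor $[0,\tau_1)$ is a fixed path depending only on $m$ and on $X$, and since the lion has no speed limit in a path-connected space, nothing you have written prevents him from planning a continuous route from $l$ that meets this fixed path at some $t\in(0,\tau_1)$. Saying the corridors must be ``only partially twinned'' names the constraint without meeting it.

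For comparison, the paper's construction uses no topologically indistinguishable pairs at all. The space $X$ is a well-ordered set of cardinality greater than $\cc$, without maximum and in which every subset of size at most $\cc$ is bounded above, carrying the Alexandroff topology (the proper open sets are the sections $X_{<x}$); $l$ is the minimum and $m$ its successor. The man evades not by twin choices but by jumping to strictly larger elements: after each jump the lion's current position lies in the open section below the man's new level, and by continuity must remain there for a positive time. The argument is organized as a Zorn's-Lemma step on a poset $A\subseteq P_m(X)$ of partial escape paths rather than a recursion on a well-ordered set of times; the cardinality hypothesis on $X$ is exactly what makes every chain in $A$ bounded above, since at most $\cc$ values of $X$ appear along any chain and hence a common ceiling exists. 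A maximal partial escape is then extended by one more jump, contradicting maximality.
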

\begin{proof}
We imitate the classical construction of the uncountable well-ordered set in which each proper section is countable (\cite[Lemma 10.2, Theorem 10.3]{Mun}). Let $\cc$ be the cardinality of the continuum $\mathbb{R}$. We construct a non-empty well-ordered set $X$ without maximum such that every subset $Y\subseteq X$ with cardinality $\# Y\le \cc$ is bounded above. For this take any well-ordered set $Z$ with maximum and cardinality greater than $\cc$ and consider the smallest element $z\in Z$ such that the section $Z_{<z}$ has cardinality greater than $\cc$. Then $X=Z_{<z}$ does not have a maximum and if $Y\subseteq X$ is such that $\# Y\le \cc$, then $\#(\bigcup\limits_{y\in Y} X_{<y})\le \cc.\cc=\cc$ so $\bigcup\limits_{y\in Y} X_{<y}\neq X$ and then $Y$ has an upper bound in $X$. 

Now consider the \textit{Alexandroff topology} in $X$ whose proper open sets are the sections $X_{<x}$ for $x\in X$ (\cite{Ale}). This is not the usual order topology (in which a basis is given by intervals $(a,b)$). $X$ is path-connected since the partial order in $X$ is a total order. Concretely, given $x\le y \in X$, $\gamma (t)=x$ for $t<1$ and $\gamma (1)=y$ defines a path from $x$ to $y$ (cf. \cite{Mcc, Sto}).

Let $l$ be the minimum of $X$ and $m$ the second element of $X$. Suppose that $S:P_{m}(X)\to P_l(X)$ is a strategy for the lion. 

If we take $\alpha_0 \in P_m(X)$ to be the constant map $m$, then $S(\alpha_0)\in P_l(X)$. Since $\{l\}=X_{<m}\subseteq X$ is open, there is an interval $[0,t_0)$ in which $S(\alpha_0)$ is smaller than $m$ (constant $l$). Let $l_1=S(\alpha_0)(t_0)$ and let $m_1>l_1$. Redefine $\alpha_0$ for $t\ge t_0$ as follows: Let $\alpha _1\in P_{m}(X)$ coincide with $\alpha_0$ in $[
0,t_0)$ and be constant $m_1$ for $t\ge t_0$. By the no-lookahead rule, $S(\alpha_1)(t_0)=l_1$. Moreover, since $X_{< m_1} $ is open, there is an interval $[t_0, t_1)$ in which $S(\alpha_1)$ is smaller than $m_1$ and so $S(\alpha_1)$ and $\alpha _1$ do not coincide at any $t\in [0,t_1)$. Be repeating this idea, we can push the $t_i$ further away. In order to formalize this, we will define an order in certain subset of $P_m(X)$, prove the existence of a maximal element $\mu$ using Zorn's Lemma and get a contradiction by finding a greater element.

Let $A$ be the set of those $\alpha \in P_m(X)$ for which there exists $t_{\alpha} \in [0,+\infty)$ with the following properties

\begin{itemize}
\item $ \alpha (t)<\alpha (t_{\alpha})$ for every $t<t_\alpha$,
\item $\alpha (t)=\alpha (t_\alpha)$ for every $t\ge t_\alpha$,
\item $S(\alpha)(t)\neq \alpha (t)$ for each $t<t_\alpha$.
\end{itemize}

Note that the element $t_\alpha$ is uniquely determined by $\alpha \in A$. Also $A\neq \emptyset$ since the constant map $m$ is in $A$ ($t_\alpha=0$).

We define an order in $A$ as follows: $\alpha \lhd \alpha '$ if $t_\alpha \le t_{\alpha'}$ and $\alpha _{\le t_\alpha}=\alpha '_{\le t_\alpha}$. Clearly $\lhd$ is reflexive, transitive and antisymmetric.

Let $C=\{\alpha _i\}_{i\in I}$ be a chain in $A$. We want to prove it has an upper bound. Define $\alpha \in P_m(X)$ as follows. Given $t\ge 0$, if there exists $i\in I$ with $t_{\alpha_i}>t$, define $\alpha (t)=\alpha_i(t)$. This is well-defined since $C$ is a chain. Moreover, $\alpha$ is continuous in $[0,t_{\alpha_i})$ for every $i\in I$. If the set $\{t_{\alpha_i}\}_{i\in I}\subseteq [0,+\infty)$ is unbounded, $\alpha$ is defined and continuous in all $[0,+\infty)$. Moreover, since $\alpha _{<t_i}=(\alpha_i)_{<t_i}$, then $S(\alpha)_{\le t_i}=S(\alpha_i)_{\le t_i}$ and therefore $S(\alpha)(t)\neq \alpha (t)$ for every $t<t_i$. Hence $S(\alpha)(t)\neq \alpha (t)$ for every $t\in [0, +\infty)$, contradicting the definition of strategy. Thus, $\{t_{\alpha_i}\}_{i\in I}$ is bounded. Let $T= \sup \{t_{\alpha_i}\}_{i\in I}$. If $T=t_{\alpha_i}$ for some $i$, then $\alpha _i$ is an upper bound for $C$. Assume then that $T>t_{\alpha_i}$ for every $i$. The set $Y=\{\alpha_i(t_{\alpha_i})\}\subseteq X$ has cardinality at most $\cc$ since $\{t_{\alpha_i}\}_{i\in I}\subseteq \mathbb{R}$ and $t_{\alpha_i}=t_{\alpha_j}$ implies $\alpha_i=\alpha_j$. Therefore $Y$ has an upper bound $x_\alpha\in X$. Recall that $\alpha$ was already defined and continuous in each interval $[0,t_{\alpha_i})$ and then in $[0,T)$. Define $\alpha (t)=x_\alpha$ for each $t\ge T$. We claim that $\alpha \in A$ and that it is an upper bound for $C$.

We will prove that $\alpha$ is continuous. Note first that if $t<T$, there exists $i$ with $t_{\alpha_i}>t$, so $\alpha (t)=\alpha_i(t) < \alpha_i(t_{\alpha_i})\le x_\alpha$. Now take a proper open set of $X$, that is a section $X_{<x}$.  If $x>x_\alpha$, $\alpha ^{-1}(X_{<x})=[0,+\infty)$ is open. If $x\le x_\alpha$, $\alpha ^{-1}(X_{<x})=\alpha_{<T}^{-1}(X_{<x})\subseteq [0,+\infty)$ is open since $\alpha_{<T}$ is continuous and $[0,T)$ is open. It is easy to see then that $\alpha \in A$ with $t_\alpha=T$ and that $\alpha_i \lhd \alpha$ for every $i\in I$.

By Zorn's Lemma, $A$ has a maximal element $\mu$. Now we apply the idea of the beginning to push $t_\mu$ further away. Let $x=S(\mu)(t_\mu)\in X$. Since $X$ does not have a maximum, there exists $y\in X$ such that $y>\max \{\mu (t_\mu), x\}$. Define $\mu' \in P_m(X)$ in the following way. $\mu '(t)=\mu (t)$ for $t<t_\mu$ and $\mu'(t)=y$ for $t\ge t_\mu$. Since $\mu '_{<t_{\mu}}=\mu _{<t_{\mu}}$, $S(\mu ')_{\le t_\mu}=S(\mu)_{\le t_\mu}$ and then $S(\mu ')$ and $\mu '$ do not coincide in $[0,t_\mu]$. Since $X_{<y}$ is open, $S(\mu')^{-1}(X_{<y})\subseteq [0,+\infty)$ is an open set which contains $t_{\mu}$. Therefore, there exists $t'>t_{\mu}$ such that $S(\mu ')(t)\neq \mu '(t)$ for every $t<t'$. Finally, define $\nu \in P_m(X)$ to be equal to $\mu '$ for $t<t'$ and constant $x_\nu$ for $t\ge t'$, where $x_\nu \in X$ is any element greater than $y$. Then $\nu \in A$, with $t_\nu=t'$, and it is strictly greater than $\mu$, a contradiction.
\end{proof}  

\begin{obs}
In the space $X$ constructed in Theorem \ref{main}, the man does not have a strategy, independently of the starting points. Suppose $S:P_l(X)\to P_m(X)$ is a strategy. Let $\beta \in P_l(X)$ be a path that goes from $l$ to $x_0=\min (X)$ in the interval $[0,\frac{1}{2}]$ and is constant $x_0$ for $t\ge \frac{1}{2}$. Define $\beta' \in P_l(X)$ to equal to $\beta$ in $[0,1)$ and constant $S(\beta)(1)$ in $[1,+\infty)$. This map is continuous. However, $S(\beta')(1)=S(\beta)(1)=\beta ' (1)$, a contradiction. Therefore, no player has a strategy in this space for a particular choice of the starting points.

Of course, the same argument shows that the man does not have a strategy in any indiscrete space.
\end{obs}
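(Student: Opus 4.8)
The plan is to show that no map $S\colon P_l(X)\to P_m(X)$ can be a strategy for the man, by feeding the lion a path that exploits the ``frozen'' behaviour at the limit time $1$. The observation that drives everything is that the no-lookahead rule pins down the value $S(\beta)(1)$ already from $\beta_{<1}$: if $\beta_{<1}=\beta'_{<1}$ then $S(\beta)_{\le 1}=S(\beta')_{\le 1}$, so in particular $S(\beta)(1)=S(\beta')(1)$. Thus the man is forced to commit to a position at time $1$ without having seen where the lion will be at time $1$, whereas in the Alexandroff topology a continuous path may jump \emph{upward} to an arbitrary element in a single instant. That gap is what I would attack.

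Concretely, I would first probe with the path $\beta\in P_l(X)$ given by $\beta(0)=l$ and $\beta(t)=x_0:=\min X$ for all $t>0$; this is continuous since each $\beta^{-1}(X_{<x})$ is $\emptyset$, $(0,+\infty)$ or $[0,+\infty)$. Set $a:=S(\beta)(1)$. The defining condition $S(\beta)(1)\neq\beta(1)=x_0$ gives $a>x_0$. Now define $\beta'\in P_l(X)$ by $\beta'_{<1}=\beta_{<1}$ and $\beta'(t)=a$ for $t\ge 1$. The one thing to verify is that $\beta'$ is continuous: every open set containing $\beta'(1)=a$ is a section $X_{<x}$ with $x>a$, and just to the left of $1$ the path $\beta'$ is the constant $x_0<a$, so continuity at $1$ holds; a routine three-case inspection (according to how $x$ compares with $x_0$, $l$ and $a$) shows $(\beta')^{-1}(X_{<x})$ is open for every $x$. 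This is exactly why $\beta$ was steered down to the minimum: it guarantees that the values of $\beta'$ near $1$ lie below $a$ no matter what $a$ turns out to be.

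To conclude, apply the no-lookahead rule to $\beta$ and $\beta'$: from $\beta_{<1}=\beta'_{<1}$ we get $S(\beta')(1)=S(\beta)(1)=a=\beta'(1)$, which contradicts the requirement that $S(\beta')(t)\neq\beta'(t)$ for all $t$. The point $m$ played no role, so the man has no strategy in $X$ for any choice of starting positions; together with Theorem \ref{main} this yields a pair $m,l$ for which neither player has a strategy. The identical three-line argument applies to any indiscrete space (cf. Proposition \ref{indiscreto}), where $\beta'$ is automatically continuous. I expect the only real difficulty to be the conceptual one: recognizing that the strategy is vulnerable precisely at the limit instant $t=1$ and that the Alexandroff topology legitimizes the instantaneous upward jump $\beta'(1)=a$; once that is seen, both the continuity check and the contradiction are immediate.
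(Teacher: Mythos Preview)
Your argument is correct and is essentially the paper's own proof: probe with a lion path that sits at $x_0=\min X$ before time $1$, read off $a=S(\beta)(1)$, and then replace $\beta$ on $[1,\infty)$ by the constant $a$ to force $S(\beta')(1)=\beta'(1)$ via the no-lookahead rule. The only cosmetic difference is that you make the descent to $x_0$ instantaneous at $t=0$ (which is indeed the natural continuous choice in this Alexandroff topology), whereas the paper phrases it as reaching $x_0$ by time $\tfrac12$; your added continuity verification for $\beta'$ is a welcome bit of extra detail.
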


\bigskip

One important class of non-Hausdorff spaces is given by \textit{$A$-spaces}, which model up to weak homotopy equivalence every CW-complex (see \cite{Mcc} and \cite{Bar}). An $A$-space is a space in which arbitrary intersections of open sets are open. For instance, any finite topological space is an $A$-space. If $X$ is an $A$-space, the \textit{open hull} of a subspace $Y$ is the intersection of all the open sets containing $Y$. The open hull of a point $x\in X$ will be denoted by $U_x$. Clearly the open hull of $Y\subseteq X$ is $\bigcup\limits_{y\in Y} U_y$. Given an $A$-space $X$ its \textit{dual} is the $A$-space with the same underlying set, but whose open sets are the closed sets of $X$. Therefore, the dual of a separable $A$-space is an $A$-space $X$ which contains a countable subset $Y$ whose open hull is $X$. Any indiscrete space satisfies this property. Thus the following result is a generalization of Proposition \ref{indiscreto}.

\begin{teo}
Let $X$ be a path-connected $A$-space which contains a countable subset $Y$ whose open hull is $X$. Then for any starting points the lion has a strategy. In particular in any finite space the lion has a strategy.
\end{teo}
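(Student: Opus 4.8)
The plan is to push the ``approach, then copy the man'' idea of Propositions~\ref{leont2} and~\ref{indiscreto} as far as it will go, repairing continuity with a \emph{clipping} device and using the countable cover $\{U_{y_n}\}_{n\in\N}$ to organise the bookkeeping. Throughout write $U_z$ for the minimal open set containing $z\in X$ and $x\preceq z$ for $x\in U_z$.

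First I would record the continuity gadget. For a continuous $\gamma:[0,+\infty)\to X$ and $z\in X$, define the \emph{$z$-clip} $\gamma^z$ by $\gamma^z(t)=\gamma(t)$ if $\gamma(t)\preceq z$ and $\gamma^z(t)=z$ otherwise. Since $\gamma^z$ takes values in $U_z$ and, for $x\ne z$, $(\gamma^z)^{-1}(U_x)$ is easily seen to be open (it is $\gamma^{-1}(U_x)$, or $\gamma^{-1}(U_x\cap U_z)$, or all of $[0,+\infty)$, according to the position of $x$ relative to $z$), the map $\gamma^z$ is continuous; and it agrees with $\gamma$ exactly on $\gamma^{-1}(U_z)$. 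The key consequence: once the lion has manoeuvred to $z$ it may, from that moment on, legitimately follow $\gamma^z$ — the minimal neighbourhood of $z$ being all of $U_z$, there is no continuity cost, whatever $\gamma$ does next. This is the non-Hausdorff surrogate for the use of $\alpha(1)=\lim_{t\to1^-}\alpha(t)$ in Proposition~\ref{leont2} and for the automatic continuity in Proposition~\ref{indiscreto}.

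Second, the structural fact that denies the man an escape. Given a continuous $\alpha:[0,+\infty)\to X$, set $n_\alpha(t)=\min\{n:\alpha(t)\preceq y_n\}$, which is well defined since the $U_{y_n}$ cover $X$. The level set $\{n_\alpha=k\}=\alpha^{-1}(U_{y_k})\setminus\bigcup_{j<k}\alpha^{-1}(U_{y_j})$ is locally closed, and a locally closed subset of $[0,+\infty)$ with empty interior is nowhere dense; as countably many such sets cover $[0,+\infty)$, Baire's theorem forces one of them to contain a nondegenerate interval. Hence there are $k$ and $a<b$ with $\alpha((a,b))\subseteq U_{y_k}$: every path of the man is trapped below some $y_k$ on some time window. (When $X$ is finite, take $Y=X$; then $n_\alpha$ has finite range and $\alpha$ is in fact constant on an interval — this settles the ``in particular'' clause.)

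Third, the strategy. With the Axiom of Choice pick, for each $x\in X$, each $s\ge 0$ and each germ at $s^+$ of a path issuing from $x$, a representative. The lion runs a countable sequence of attempts; in an attempt with target $y_n$ it travels — in positive time, using path-connectedness — from its current position to $y_n$, waits there, and then follows the $y_n$-clip of the chosen representative of the germ of $\alpha$ read off on arrival, departing for the next target after a while. By the gadget every leg of the path is continuous, and since the clipped germ-copy at time $t$ consults $\alpha$ only strictly before $t$, the no-lookahead rule holds. Whenever the man lies in $U_{y_n}$ just after the lion reaches $y_n$, the clipped germ-copy coincides with $\alpha$ on a nonempty subinterval and the man is caught. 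The remaining work — and where I expect the main difficulty — is to schedule the targets so that the trapping window of the second step is inevitably covered by an attempt with the matching target: the lion cannot foresee which $U_{y_k}$ the man will be trapped in nor when, so the schedule must be \emph{reactive} (biasing the next target towards the least $U_{y_n}$ the man has recently been seen inside) while still revisiting every target arbitrarily late. The sharp constraint is that the attempt times must not accumulate at a finite instant while the targets vary, since then the lion's path would meet infinitely many of the maximal $y_n$ near that instant and fail to be continuous (in general no point of $X$ lies above them all); spreading the attempts out to $+\infty$ and then verifying global continuity on each compact interval, where only finitely many attempts and travel legs occur, is the crux that this plan leaves to be carried out carefully.
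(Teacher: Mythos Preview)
Your clipping device and the germ-choice machinery are sound and indeed match what the paper does. The Baire argument in your second step is also correct as stated. But it is the wrong structural fact to aim for, and it is what leads you into the scheduling problem you explicitly leave open.

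The paper's proof uses a simpler structural fact together with the very scheduling trick you rule out. Fix a strictly increasing sequence $(t_n)$ in $(0,1)$ converging to $1$, and a sequence $(y_n)$ in $Y$ in which every $y\in Y$ appears infinitely often. The lion follows a single \emph{fixed} path $\beta:[0,1)\to X$ with $\beta(t_n)=y_n$ for every $n$ (built by concatenating paths; there is no continuity issue at the accumulation since $1\notin[0,1)$). The structural fact is then one line: by continuity of $\alpha$ at $t=1$ one has $\alpha(t_n)\in U_{\alpha(1)}$ for all large $n$, and $U_{\alpha(1)}\subseteq U_y$ for some $y\in Y$; since $y_n=y$ for infinitely many $n$, there exists $n$ with $\alpha(t_n)\in U_{y_n}$. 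Taking $n_\alpha$ minimal with this property, the lion follows $\beta$ on $[0,t_{n_\alpha}]$ and germ-copies (exactly as in your plan) thereafter, which is continuous because $\alpha(t_{n_\alpha})\in U_{y_{n_\alpha}}=U_{\beta(t_{n_\alpha})}$.

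Your worry that accumulating attempt times wrecks continuity is exactly backwards. Accumulation at the finite time $1$ is what \emph{forces} a match, and the lion's own path has no continuity problem there because it abandons $\beta$ at $t_{n_\alpha}<1$, before the accumulation point is ever reached. Conversely, your proposal to spread attempts to $+\infty$ discards the only limit point at which continuity of $\alpha$ could be exploited: the Baire-supplied trapping window $(a,b)$ may sit entirely between two of your attempts (or before the first one), and a reactive bias cannot repair this, since nothing stops the man from occupying $U_{y_k}$ precisely on $(a,b)$ and never re-entering it. So the gap you flag is not a matter of care in the write-up; the plan as stated points away from the idea that makes the proof work.
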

\begin{proof}
Let $m,l\in X$ be the starting points. Let $(y_n)_{n\in \N}$ be a sequence of points of $Y$ such that every $y\in Y$ appears infinitely many times in the sequence. Let $(t_n)_{n\in \N}$ be a strictly increasing sequence in $(0,1)$ which converges to $1$. Since $X$ is path-connected there exists a continuous curve $\beta: [0,1)\to X$ starting in $l$ such that $\beta (t_n)=y_n$ for every $n\in \N$. Let $\alpha \in P_m(X)$. We claim that there exists $n\in \N$ such that $\alpha (t_n) \in U_{y_n}$. Indeed, since $\lim\limits_{n\to \infty} \alpha (t_n) =\alpha (1)$, then $\alpha (t_n) \in U_{\alpha (1)}$ for $n$ big enough. Since $X=\bigcup\limits_{y\in Y} U_y$, $\alpha (1)\in U_y$ for some $y\in Y$ and then $U_{\alpha (1)}\subseteq U_y$. Take $n$ big enough and such that $y_n=y$. Then $\alpha (t_n) \in U_{y_n}$. For each $\alpha \in P_m(X)$ define $n_{\alpha}$ to be the smallest $n\in \N$ such that $\alpha (t_n) \in U_{y_n}$.

We define now an equivalence relation in $P_m(X)$. Say that $\alpha \sim \alpha'$ if $n_\alpha=n_{\alpha '}$ and there exists $t>t_{n_\alpha}$ such that $\alpha \restr_{(t_{n_\alpha},t)}=\alpha ' \restr_{(t_{n_\alpha},t)}$. With the Axiom of Choice take for every equivalence class $c$ a representative $r(c)$. Given $\alpha \in P_m(X)$ define $S(\alpha) \in P_l(X)$ as follows. $S(\alpha)(t)=\beta (t)$ for $t\le t_{n_\alpha}$ and $S(\alpha)(t)=r(\overline{\alpha})(t)$ for $t>t_{n_\alpha}$. The path $S(\alpha)$ is continuous since $r(\overline{\alpha})(t_{n_\alpha}) \in U_{\beta (t_{n_\alpha})}$. Moreover, $S:P_m(X) \to P_l(X)$ satisfies the no-lookahead rule. If $\alpha_{<t}=\alpha '_{<t}$ for some $t\in [0,+\infty)$ and $t_{n_\alpha} \ge t$, then $S(\alpha)$ and $S(\alpha ')$ coincide with $\beta$ in $[0,t]$. If $t_{n_\alpha}<t$, then $\alpha \sim \alpha '$ so $S(\alpha)=S(\alpha ')$. Finally, for any $\alpha \in P_m(X)$, the paths $S(\alpha)$ and $\alpha$ coincide in $(t_{n_\alpha},t)$ for some $t>t_{n_\alpha}$. This shows that $S$ is a strategy for the lion.    
\end{proof}

So far we have constructed spaces in which both players have a strategy (like $S^1$), where none of them has a strategy (the space in Theorem \ref{main}) or where only the lion has a strategy (any indiscrete space). We will show now that there are spaces where the man is the only player with a strategy.

\begin{lema} \label{retracto}
Let $X$ be a topological space and let $r:X\to A$ be a retraction onto a subspace $A\subseteq X$. Let $m\in A$ and $l\in X$. Then

\noindent (a) If the man has a strategy in $A$ for initial points $m, r(l) \in A$, then it also has a strategy in $X$ for initial points $m,l\in X$.

\noindent (b) If the lion has a strategy in $X$ for initial points $m,l\in X$, then it also has a strategy in $A$ for initial points $m, r(l) \in A$.
\end{lema}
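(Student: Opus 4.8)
The plan is to obtain the desired strategies by composing the given strategy with the retraction $r$ on the appropriate side, and to check that the two defining conditions survive the composition. Throughout I would use the two elementary facts that $r\restr_A=\mathrm{id}_A$ and that, since $A$ carries the subspace topology, $P_m(A)$ sits inside $P_m(X)$ and a path in $X$ whose image lies in $A$ corestricts to a path in $A$.

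For part (a), let $S_A:P_{r(l)}(A)\to P_m(A)$ be a strategy for the man in $A$. Given a lion path $\beta\in P_l(X)$, the composite $r\circ\beta$ is a continuous path in $A$ starting at $r(l)$, so I define $S(\beta):=S_A(r\circ\beta)$, regarded as an element of $P_m(A)\subseteq P_m(X)$. The no-lookahead rule is routine: if $\beta_{<t}=\beta'_{<t}$ then $(r\circ\beta)_{<t}=(r\circ\beta')_{<t}$, whence $S_A(r\circ\beta)_{\le t}=S_A(r\circ\beta')_{\le t}$. For the avoidance condition, fix $t\ge 0$ and argue by cases: if $\beta(t)\notin A$ then $S(\beta)(t)\in A$ already forces $S(\beta)(t)\neq\beta(t)$; if $\beta(t)\in A$ then $r(\beta(t))=\beta(t)$ and the corresponding property of $S_A$ gives $S(\beta)(t)\neq (r\circ\beta)(t)=\beta(t)$.

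For part (b), let $S_X:P_m(X)\to P_l(X)$ be a strategy for the lion in $X$. Given $\alpha\in P_m(A)$, view it as an element of $P_m(X)$; then $r\circ S_X(\alpha)$ is a continuous path in $A$ starting at $r(l)$, and I set $S(\alpha):=r\circ S_X(\alpha)$. Again no-lookahead is immediate: paths agreeing on $[0,t)$ in $A$ agree there in $X$, so $S_X(\alpha)_{\le t}=S_X(\alpha')_{\le t}$ and post-composing with $r$ preserves this. For the capture condition, choose $t$ with $S_X(\alpha)(t)=\alpha(t)$; since $\alpha(t)\in A$ we get $S(\alpha)(t)=r(S_X(\alpha)(t))=r(\alpha(t))=\alpha(t)$.

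There is no genuine obstacle here; the single point that requires care is the capture/avoidance clause, and it is precisely there that the identity $r\restr_A=\mathrm{id}_A$ is used — in (a) to deal with the times at which the lion's position already lies in $A$ (positions outside $A$ being automatically avoided), and in (b) to see that a capture in $X$, which by definition occurs at a point of the man's trajectory and hence in $A$, is not destroyed by applying $r$. It only remains to note in passing that all the constructed maps are continuous with the prescribed starting points, which is clear since they are composites of continuous maps.
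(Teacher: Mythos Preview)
Your proof is correct and follows exactly the same approach as the paper: in (a) you pre-compose the lion's path with $r$ and apply the man's strategy in $A$, and in (b) you post-compose the lion's strategy in $X$ with $r$, with the same case analysis for the avoidance/capture clauses. If anything, your treatment of (b) is more explicit than the paper's, which simply declares the verification easy.
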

\begin{proof}
Let $S:P_{r(l)}(A)\to P_m(A)$ be a strategy for the man in $A$. Define $\widetilde{S}:P_l(X) \to P_m(X)$ by $\widetilde{S}(\beta)=S(r\beta)$. Clearly $\widetilde{S}$ satisfies the no-lookahead rule. We must check that $\widetilde{S}(\beta)(t)=S(r\beta)(t)\neq \beta (t)$ for every $t$. If $\beta (t)\notin A$, this is obvios since $S(r\beta)(t)\in A$. If $\beta(t)\in A$, then $S(r\beta)(t)\neq r\beta(t)=\beta (t)$.

For the second part, suppose $S:P_m(X)\to P_l(X)$ is a strategy for the lion. It is easy to check that the map $\overline{S}:P_m(A)\to P_{r(l)}(A)$ defined by $\overline{S}(\alpha)(t)=r(S(\alpha)(t))$ provides a strategy for the lion in $A$.
\end{proof} 

\begin{prop}
There exists a path-connected space $Y$ and starting points $m,l\in Y$ for which only the man has a strategy. 
\end{prop}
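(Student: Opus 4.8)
The plan is to realize $Y$ as a wedge of a circle with the non-Hausdorff space produced in Theorem \ref{main}, and then to apply Lemma \ref{retracto} twice --- once in each direction --- using two \emph{different} retractions. Write $X_0$ for the space constructed in Theorem \ref{main}, let $x_0=\min X_0$ and let $m_0$ be its second element, so that the lion has no strategy in $X_0$ for the initial points $(m_0,x_0)$. Fix a point $p\in S^1$ and set $Y=S^1\vee X_0$, the quotient of $S^1\sqcup X_0$ identifying $p$ with $m_0$; a map out of $Y$ is continuous if and only if its restrictions to $S^1$ and to $X_0$ are. Since $S^1$ and $X_0$ are path-connected and share the point $p$, $Y$ is path-connected. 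As initial points I take $m=p$ and $l$ any point of $S^1$ different from $p$.

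For the man: the map $r\colon Y\to S^1$ which is the identity on $S^1$ and constant $p$ on $X_0$ is a retraction. Because $l\neq p$, a rotation $f_0\colon S^1\to S^1$ carrying $l$ to $p$ has no fixed point, so by Proposition \ref{hombret2} the man has a strategy in $S^1$ for the initial points $(f_0(l),l)=(p,l)$. Since $r(l)=l$, Lemma \ref{retracto}(a) (with the retract $S^1$) upgrades this to a strategy for the man in $Y$ for $(m,l)=(p,l)$.

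For the lion: I would first produce a non-constant continuous map $f\colon S^1\to X_0$. Choose an open arc $U\subseteq S^1$ with $l\in U$ and $p\notin U$, and let $f$ send $U$ to $x_0$ and $S^1\setminus U$ to $m_0$; continuity is a direct check, since the preimage of a proper open set $X_{<x}$ of $X_0$ is $\emptyset$, $U$, or $S^1$ according to whether $x\le x_0$, $x_0<x\le m_0$, or $x>m_0$. As $f(p)=m_0$, pasting $f$ with $\mathrm{id}_{X_0}$ defines a retraction $r'\colon Y\to X_0$ with $r'(l)=x_0$. If the lion had a strategy in $Y$ for $(p,l)$, then Lemma \ref{retracto}(b) (with the retract $X_0$, using $m=p=m_0\in X_0$) would give the lion a strategy in $X_0$ for the initial points $(m_0,x_0)$, contradicting Theorem \ref{main}. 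Hence in $Y$, for these $m$ and $l$, the man has a strategy and the lion does not.

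I expect the only delicate point to be the construction of the second retraction. The naive retraction $Y\to X_0$ collapses all of $S^1$, hence $l$, to the wedge point $m_0$; then Lemma \ref{retracto}(b) would only assert that the lion has a strategy in $X_0$ when lion and man start at the same point, which is vacuous. Avoiding this forces the non-constant map $f$, whose existence is a manifestation of the abundance of continuous maps into a non-Hausdorff space; what needs care is to make $f$ fix $p\mapsto m_0$ while simultaneously sending $l$ to $x_0$, so that the hypothesis of Theorem \ref{main} is met on the nose.
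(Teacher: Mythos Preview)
Your argument is correct, and it is a genuine simplification of the paper's construction. The paper builds $Y=X\vee S^1\vee X'$ with two copies of the space from Theorem~\ref{main}, attaching $x_1$ to $p\in S^1$ and $x'_1$ to the antipode $q$; it then takes $m=p$ and $l=x'_0$, so that the two retractions it needs are just collapses (send $X'\cup S^1$ to the wedge point to get $Y\to X$, send $X\to p$ and $X'\to q$ to get $Y\to S^1$). The second copy $X'$ is there precisely so that the lion's starting point has a nontrivial image under \emph{both} retractions without any work. You achieve the same effect with a single copy $Y=S^1\vee X_0$ by exploiting the non-Hausdorffness of $X_0$: since $\{x_0\}=X_{<m_0}$ is open, the ``step'' map $f\colon S^1\to X_0$ sending an arc around $l$ to $x_0$ and the rest to $m_0$ is continuous, and pasting it with $\mathrm{id}_{X_0}$ gives a retraction $Y\to X_0$ carrying $l$ to $x_0$. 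Your space is smaller and the argument is arguably cleaner; the paper's version trades a slightly larger wedge for retractions that require no verification at all. Both routes feed into Lemma~\ref{retracto} in exactly the same way.
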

\begin{proof}
Let $X$ be the space constructed in Theorem \ref{main}, $x_0\in X$ its minimum and $x_1\in X$ its second element. Recall than the lion does not have a strategy if he starts at $x_0$ and the man starts at $x_1$. Let $Y=X\vee S^1 \vee X$ be the space obtained from two copies $X$ and $X'$ of $X$ and one of $S^1$ by identifying the point $x_1\in X$ with a point $p\in S^1$ and the corresponding point $x'_1\in X'$ with the antipodal point $q\in S^1$ of $p$. Define $m=x_1=p$ and $l=x'_0$. There is a retraction $Y\to X$ which maps $X'$ to $X$ with the identity and maps $S^1$ to $m$. By the previous lemma, the lion does not have a strategy in $Y$ for those starting points. There is another retraction $Y\to S^1$ which maps $X$ to $m$ and $X'$ to $x'_1=q$. By the lemma, the man has a strategy in $Y$.   
\end{proof}

To finish, we go back to the start. Recall that in Rado's original problem lion and man moved in a circular arena with the same maximum speed. For 25 years the lion was believed to be the one with a strategy. This ``strategy'' consisted in keeping the lion in the radius determined by the man. The argument used the wrong assumption that the best thing for the man was to stay in the boundary of the circle. Besicovitch showed that the man can always escape from the lion but staying in the interior of the arena. Proposition \ref{leont2} says in particular that in our version of the problem, the lion has a strategy in $D^2$. What about the man? We cannot use Proposition \ref{hombret2} by the Brouwer Fixed Point Theorem. In fact, \cite[Theorem 7]{BLW} shows that there is no continuous strategy for the man (considering $P_l(D^2)$ with a topology that makes the inclusion $i:D^2\to P_l(D^2)$ continuous, where $i(x)$ is the straight path from $l$ to $x$, and considering $P_m(D^2)$ with any topology that makes the evaluation $ev:P_m(D^2)\to D^2$ in $1$ continuous, like the compact-open topology). Already escaping from a unique lion $\beta \in P_l(D^2)$ does not seem simple. How can the man escape a Peano lion which fills the disk?

We will prove that given any curve $\beta \in P_0(D^2)$, there exists a path $S(\beta) \in P_1(D^2)$ which escapes from $\beta$. Moreover, we will prove that $S$ can be constructed satisfying the no-lookahead rule. In other words, Besicovitch's result also holds in our setting. Surprisingly enough, in our strategy the man stays all the time in the boundary of the disk.

We recall for the non-expert the statement of a very particular case of the lifting lemma \cite[Lemma 79.1]{Mun}. Suppose that $J\subseteq \R$ is an interval and that $f:J\to S^1$ is a continuous map. Then there exists a lifting of $f$ to the universal cover of $S^1$, that is a continuous map $\widetilde{f}:J\to \R$ such that $p\widetilde{f}=f$, where $p:\R \to S^1 \subseteq \mathbb{C}$ is defined by $p(t)=e^{2\pi i t}$. Moreover, if $x_0\in J$ and $t_0\in \R$ are such that $f(x_0)=p(t_0)$, then there exists a unique lifting $\widetilde{f}$ of $f$ such that $\widetilde{f}(x_0)=t_0$.

\begin{teo} \label{besi}
Let $l=0\in D^2\subseteq \mathbb{C}$ be the center of the disk and let $m=1\in D^2$. Then the man has a strategy. Moreover, he can keep on the boundary of the disk during the whole pursuit.
\end{teo}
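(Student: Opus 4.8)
The plan is to construct an explicit strategy where the man always stays on $S^1 = \partial D^2$, moving so as to keep the lion's position off the boundary ray through the man. The man's position at time $t$ will be $S(\beta)(t) = e^{2\pi i \theta(t)}$ for a suitable continuous function $\theta:[0,+\infty)\to\R$ with $\theta(0)=0$. The key idea is: whenever the lion $\beta(t)$ lies on the closed radius $\{r e^{2\pi i \theta(t)} : 0 \le r \le 1\}$ that the man currently occupies the endpoint of, the man must be moving (strictly), and he can only be ``trapped'' there if the lion is exactly at $e^{2\pi i\theta(t)}$, which we must avoid. So the real content is choosing $\theta$ to run away from the lion's angular position at unit or faster angular speed whenever there is danger.

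To make this precise I would first use the lifting lemma quoted above. On the open set $G = \{t : \beta(t) \ne 0\} \subseteq [0,+\infty)$, which is a countable disjoint union of open intervals, the map $t \mapsto \beta(t)/|\beta(t)|$ lifts to a continuous $\widetilde{b}: G \to \R$ (choosing the lift on each component, and on the unbounded component containing large $t$ anchoring it appropriately; the choices on different components are independent and harmless). On $G$ the lion's ``angular position'' is $\widetilde{b}(t)$, and the man is captured at time $t$ iff $\beta(t) = e^{2\pi i\theta(t)}$, i.e. $|\beta(t)| = 1$ and $\theta(t) \equiv \widetilde{b}(t) \pmod 1$. When $\beta(t) = 0$ there is never danger since the man is on the boundary. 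So I want $\theta$ such that for every $t$ with $|\beta(t)|=1$ we have $\theta(t) \not\equiv \widetilde{b}(t) \pmod 1$. A clean way: maintain the invariant that $\theta(t) - \widetilde{b}(t)$ stays strictly between $0$ and $1$ — say pinned to a fixed offset, except that as the lion's angle drifts, the man matches its drift, and whenever the lion would ``catch up'' the man accelerates ahead. Concretely one can try $\theta(t) = \widetilde{b}(t) + \tfrac12$ on $G$, extended continuously across the zeros of $\beta$ by freezing $\theta$ there; this is continuous because approaching a zero of $\beta$ from within a component of $G$, $\widetilde b$ need not have a limit — so a direct formula fails and we instead define $\theta$ by a differential/monotone construction below.

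Here is the construction I would actually carry out. Define $\theta$ to be nondecreasing, with $\theta(0)=0$, governed as follows. Let $d(t)$ denote the ``angular distance from capture'': on $G$ with the local lift, $d(t) = \operatorname{dist}(\theta(t) - \widetilde b(t), \Z) \in [0,\tfrac12]$, and set $d(t) = \tfrac12$ (maximal safety) when $\beta(t)=0$. The man uses the rule: move counterclockwise (increase $\theta$) at ``speed'' equal to $\max\{0, \text{rate at which } \widetilde b \text{ is increasing}\}$ plus an extra fixed positive push $\varepsilon$ whenever $d(t) < \tfrac14$, and stay put otherwise; but since $\beta$ is merely continuous (not Lipschitz in our setting!) ``speed'' must be handled by comparison rather than ODEs. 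So instead I would build $\theta$ directly on the countable family of components $(a_j, b_j)$ of $G$: on each component the function $\widetilde b$ is continuous, and I define $\theta(t) := \max_{a_j \le s \le t}\big(\widetilde b(s) + \tfrac12\big)$, clipped so it never exceeds $\widetilde b(t)+\tfrac12$ from above nor drops below $\widetilde b(t)$; outside $G$, $\theta$ is constant equal to its last value. Then on $G$ one has $\widetilde b(t) < \theta(t) \le \widetilde b(t) + \tfrac12$, so $\theta(t) - \widetilde b(t) \in (0,\tfrac12]$, hence $\theta(t) \not\equiv \widetilde b(t) \pmod 1$, giving $S(\beta)(t) = e^{2\pi i\theta(t)} \ne \beta(t)$ whenever $|\beta(t)| = 1$; and the running-max is continuous and $[0,+\infty)\to\R$ matches up continuously at the endpoints of components because near $a_j$, $\theta$ starts at $\widetilde b(a_j^+)+\tfrac12$ which, by continuity of $\beta$ at $a_j$ where $|\beta(a_j)|=1$... — and this matching is precisely the delicate point.

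\textbf{The main obstacle} I anticipate is exactly this continuity-at-the-boundary-of-$G$ issue and the no-lookahead rule, which interact. When $\beta$ hits $0$ and then leaves again, the new local lift $\widetilde b$ can be off by an arbitrary integer (and even its one-sided limit at the re-entry point may not exist if $\beta$ spirals), so I cannot naively glue ``$\widetilde b + \tfrac12$'' across a zero. The fix is that while $\beta(t)=0$ the man has total freedom (no capture possible), so $\theta$ can be driven, during such an interval, to whatever value makes the restart safe; but this must be decided using only $\beta_{<t}$, so the strategy must pre-commit: e.g. each time $\beta$ returns to $0$, reset the ``target offset'' and, over the next excursion, reacquire the invariant $\theta - \widetilde b \in (0,\tfrac12)$ by a short burst of rotation. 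Verifying that (i) this can always be done continuously, (ii) it respects no-lookahead (the man's move on $\beta_{<t}$ depends only on past values — true, since the running max and the reset rule are causal), and (iii) the invariant is genuinely restored before any danger arises (using continuity of $\beta$: $|\beta|$ cannot jump from $0$ to $1$), is where the technical work lies. Once the invariant $0 < \theta(t) - \widetilde b(t) < 1$ holds on all of $G$ and $\theta$ is continuous with $\theta(0)=0$, properties i and ii of a strategy for the man follow immediately, completing the proof — and as a bonus $S(\beta)$ has image in $S^1 \subseteq D^2$ throughout.
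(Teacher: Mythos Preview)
Your framework matches the paper's: the man stays on $S^1$ at $e^{2\pi i\theta(t)}$, and you lift the lion's angular component $\beta/|\beta|$ on $G=\{t:\beta(t)\ne 0\}$ to a real-valued $\widetilde b$. But the specific constructions you propose for $\theta$ do not work, and the point you yourself flag as ``precisely the delicate point'' is left unresolved.

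The concrete failure is this. On a bounded component $(a_j,b_j)$ of $G$ we have $\beta(b_j)=0$, and the lift $\widetilde b$ may be \emph{unbounded} near $b_j$ (take $\beta(t)=(b_j-t)\,e^{i/(b_j-t)}$, which spirals into the origin). Your running-max $\theta(t)=\max_{a_j\le s\le t}\bigl(\widetilde b(s)+\frac12\bigr)$ then diverges as $t\to b_j^-$, so $\theta$ cannot extend continuously across $b_j$; the vague ``clipping'' does not repair this, and the ``reset while $\beta=0$'' idea fails too because $\{\beta=0\}$ may reduce to the single point $b_j$, leaving no time to move. More fundamentally, the invariant $\theta-\widetilde b\in(0,1)$ cannot be maintained on a component on which $\widetilde b$ is unbounded, so the whole scheme of tracking $\widetilde b$ additively on all of $G$ is doomed.

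The paper's fix is a different idea that you are missing: dampen by the lion's radius. Writing $\rho(t)=|\beta(t)|$ and $\omega$ for the lift, set $\theta(t)=0$ when $\rho(t)\le\frac12$ and $\theta(t)=(2\rho(t)-1)\bigl(\omega(t)+\frac12\bigr)$ when $\rho(t)\ge\frac12$. Now $\theta$ uses $\omega$ only where $\rho\ge\frac12$; there $\omega$ is continuous, hence locally bounded, and the factor $2\rho-1$ kills the product as $\rho\to\frac12$, so continuity of $\theta$ is automatic everywhere. When $\rho(t)=1$ one gets $\theta(t)=\omega(t)+\frac12$, so $S(\beta)(t)=-\beta(t)\ne\beta(t)$; when $\rho(t)<1$ the man is on $S^1$ and the lion is not.

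Your treatment of no-lookahead is also incomplete. Declaring the lift choices on different components ``independent and harmless'' is not enough: if $\beta_{<t}=\beta'_{<t}$ with $t$ interior to a component $(a_j,b_j)$, the corresponding components for $\beta$ and $\beta'$ share the left endpoint $a_j$ but may differ on the right, and you need the two lifts to agree on $(a_j,t]$. This forces the lift on a component to be determined by the \emph{germ} of $r\beta$ at $a_j^+$. The paper makes this precise via the Axiom of Choice: for each $a\ge 0$ and each equivalence class of $S^1$-valued germs at $a^+$, fix once and for all a representative and a lift of it, then extend uniquely. Your ``anchoring appropriately'' needs to become exactly this.
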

\begin{proof}
The idea is the following. While the lion is inside the concentric circle of radius $\frac{1}{2}$, the man stays in the point $m$. When the lion goes outside that circle, the man moves continuously in $S^1$ in such a way that, when the lion reaches the boundary of $D^2$, the man is in the antipodal point. If $\beta \in P_l(D^2)$, then $\beta (t)=\rho(t)e^{2\pi i \omega (t)}$ for some $\rho:[0,+\infty)\to \R_{\ge 0}$ continuous and some $\omega: [0,+\infty) \to \R$ continuous in $\beta ^{-1}(D^2\smallsetminus \{0\})$. We define $\alpha \in P_m(D^2)$ as follows: $\alpha(t)=e^{2\pi i \theta (t)}$ with $\theta (t)=0$ if $\rho (t)\le \frac{1}{2}$ and $\theta (t)=(2\rho (t)-1)(\omega (t) +\frac{1}{2})$ if $\rho (t) \ge \frac{1}{2}$. This is well defined and continuous, and allows the man to escape the curve $\beta$. However, to turn this into a strategy for the man we must show that $\omega$ can be chosen satisfying the no-lookahead rule.

Given $a\in [0,+\infty)$, define an equivalence relation in the set of continuous maps with values in $S^1$ defined in an interval $(a,b)$ for some $b>a$. We say that $f:(a,b)\to S^1$ and $g:(a,c)\to S^1$ are \textit{$a$-equivalent} if there exists $t\in (a,b)\cap (a,c)$ such that $f\restr _{(a,t)}=g\restr _{(a,t)}$. For each $a$ and each equivalence class choose a representative $f:(a,b)\to S^1$ and choose a lifting $\widetilde{f}:(a,b)\to \R$ to the universal cover of $S^1$.

Now, let $\beta \in P_l(D^2)$. Let $r:D^2\smallsetminus \{0\} \to S^1$ be the radial (or any other continuous) retraction. Then the composition $r\beta \restr _{\beta ^{-1}(D^2\smallsetminus \{0\})} : \beta ^{-1}(D^2\smallsetminus \{0\})\to S^1$ is well-defined and continuous. Since $\beta ^{-1}(D^2\smallsetminus \{0\})\subseteq [0,+\infty)$ is open and does not contain the origin $0$, it is a disjoint union of open intervals $(a_i,b_i)$. We can lift $r\beta \restr _{(a_i,b_i)}$ to a map $(a_i,b_i)\to \R$ by considering the representative of the $a_i$-class and its chosen lifting. Concretely, if the chosen representative of $r\beta \restr _{(a_i,b_i)}$ is $f:(a_i,c)\to S^1$ and $r\beta \restr _{(a_i,t)}=f\restr _{(a_i,t)}$ for some $t\in (a_i,b_i) \cap (a_i,c)$, then the chosen lifting $\widetilde{f}:(a_i,c)\to \R$ restricts to a lifting of $f\restr _{(a_i,t)}$ and this extends to a lifting of $r\beta \restr _{(a_i,b_i)}$. The family of maps $(a_i,b_i) \to \R$ determines a continuous lifting $\omega :\beta ^{-1}(D^2\smallsetminus \{0\})\to \R$ of $r\beta \restr _{\beta ^{-1}(D^2\smallsetminus \{0\})}$. Now define $S(\beta) \in P_m(D^2)$ as explained above: $S(\beta)(t)=e^{2\pi i \theta (t)}$ where $\theta(t)=0$ if the norm $\left\lVert \beta (t) \right\rVert \le \frac{1}{2}$ and $\theta (t)=(2\left\lVert \beta (t) \right\rVert-1)(\omega (t) +\frac{1}{2})$ if $\left\lVert \beta (t) \right\rVert \ge \frac{1}{2}$. Then $S(\beta)$ is continuous. Moreover, if $\left\lVert \beta (t) \right\rVert=1$, then $\theta (t)=\omega (t)+\frac{1}{2}$, so $S(\beta)(t)=-e^{2\pi i \omega (t)}=-r\beta (t)=-\beta (t)$. In particular $S(\beta)(t)\neq \beta (t)$ for every $t\ge 0$. We verify the no-lookahead rule. Suppose $\beta_{<t}=\beta'_{<t}$ for some $\beta, \beta'\in P_l(D^2)$ and $t> 0$. If $\beta(t)=\beta'(t)=0$, then $t$ does not belong to any of the intervals $(a_i,b_i)$ in the decomposition of $\beta ^{-1}(D^2\smallsetminus \{0\})$ nor the intervals $(a'_j,b'_j)$ in the decomposition of $(\beta ') ^{-1}(D^2\smallsetminus \{0\})$. Moreover, the intervals $(a_i,b_i)$ with $b_i<t$ and the intervals $(a'_j,b'_j)$ with $b'_j<t$ are the same, so $\omega : \beta ^{-1}(D^2\smallsetminus \{0\}) \cap [0,t) \to \R$ coincides with $\omega ': (\beta ') ^{-1}(D^2\smallsetminus \{0\}) \cap [0,t) \to \R$ and then $S(\beta)_{\le t}=S(\beta ')_{\le t}$. Suppose now that $\beta (t)=\beta '(t)\neq 0$, then $t$ belongs to an interval $(a_i,b_i)$ and to another $(a'_j,b'_j)$ with $a_i=a'_j$. The intervals at the left of $a_i$ are the same in both decompositions. By assumption $r\beta \restr _{(a_i,b_i)}$ and $r\beta' \restr _{(a'_j,b'_j)}$ are $a_i$-equivalent. Then the liftings $(a_i,b_i)\to \R$ and $(a'_j,b'_j)\to \R$ of both maps coincide in an interval $(a_i,t')$, and therefore also in $(a_i,t]$. Thus $\omega= \omega ' :\beta ^{-1}(D^2\smallsetminus \{0\}) \cap [0,t]$ and then $S(\beta)_{\le t}=S(\beta')_{\le t}$.

\end{proof}

Theorem \ref{besi} can be used together with Lemma \ref{retracto} to show that the man has a strategy in a large class of examples. Suppose $X$ is a normal space and $l\in X$ is such that there is a subspace $U\ni l$ of $X$ homeomorphic to an open 2-dimensional disk. Since $D^2$ is an absolute retract, for any $m\in X$ different from $l$, the man will have a strategy. This can be applied for instance to any $n$-dimensional manifold with $n\ge 2$.  

\bigskip

\textbf{Acknowledgment:} I want to thank Charly Di Fiore for showing me how to use AC to save prisoners with colored hats many years ago.


\begin{thebibliography}{99}

\bibitem{ABG} S. Alexander, R.L. Bishop, R. Ghrist. \textit{Pursuit and evasion in non-convex domains
of arbitrary dimension}.
In Proceedings of Robotics: Science and Systems, Philadelphia,
USA (2006).

\bibitem{Ale} P.S. Alexandroff. \textit{Diskrete r\"aume}.
    MathematiceskiiSbornik (N.S.) 2 (1937), 501-518.

\bibitem{Bar} J.A. Barmak. \textit{Algebraic topology of finite topological spaces and applications}.
		Lecture Notes in Mathematics Vol. 2032. Springer (2011) xviii+170 pp.

\bibitem{BLW} B. Bollob\'as, I. Leader, M. Walters. \textit{Lion and man-can both win?} 
Israel J. Math. 189 (2012), 267-286.

\bibitem{BKK} M. Bramson, K. Burdzy, W. Kendall. \textit{Shy couplings, $\cat(0)$ spaces, and the lion and man}.  Ann. Probab. 41 (2013), 744-784.

\bibitem{BKK2} M. Bramson, K. Burdzy, W. Kendall. \textit{Rubber bands, pursuit games and shy couplings}.
Proc. London Math. Soc. 109 (2014), 121-160.

\bibitem{Croft} H.T. Croft. \textit{Lion and man: a postscript}.
J. London Math. Soc. 39 (1964) 385-390.

\bibitem{Lit} J. E. Littlewood. \textit{A mathematician's miscellany}.
Methuen \& Co., Ltd., London, 1953. vii+136 pp. 

\bibitem{Mcc} M.C. McCord. \textit{Singular homology groups and homotopy groups of finite topological spaces}.
    Duke Math. J. 33 (1966), 465-474.
		
\bibitem{Mun} J.R. Munkres. \textit{Topology, Second Edition}.
		Prentice Hall (2000).
		
\bibitem{Sga} J. Sgall. \textit{A solution of David Gale’s lion and man problem}.
Theoretic Comput. Sci. 259 (2001), 663-670.		
		
		
\bibitem{Sto} R.E. Stong. \textit{Finite topological spaces}.
    Trans. Amer. Math. Soc. 123 (1966), 325-340.

        
\end{thebibliography}
\end{document}